\documentclass[11pt]{article}
\usepackage{sigsam, amsmath,amsthm}
\usepackage{ifmtarg}
\usepackage{mathtools}
\usepackage{verbatimbox}
\usepackage{subcaption}

\newtheorem{theorem}{Theorem}


\makeatletter

    \newcommand\contFrac{\@ifstar{\@contFracStar}{\@contFracNoStar}}

    \def\singleContFrac#1#2{%
        \begin{array}{@{}c@{}}%
            \multicolumn{1}{c|}{#1}%
            \\%
            \hline%
            \multicolumn{1}{|c}{#2}%
        \end{array}%
    }
    \def\@contFracStar#1{%
        \mathchoice{
            \@contFracStarDisplay@#1////\@nil%
        }{
            \@contFracStarInline@#1//\@nil%
        }{
            \@contFracStarInline@#1//\@nil%
        }{
            \@contFracStarInline@#1//\@nil%
        }%
    }

    \def\@contFracStarDisplay@#1//#2//#3\@nil{%
        \@ifmtarg{#2}{%
            #1%
        }{%
            #1 + \cfrac{#2}{\@contFracStarDisplay@#3\@nil}%
        }%
    }

        \def\@contFracStarInline@#1//#2\@nil{%
            \@ifmtarg{#2}{%
                #1%
            }{%
                #1 \@@contFracStarInline@@#2\@nil%
            }%
        }
        \def\@@contFracStarInline@@#1//#2//#3\@nil{%
            \@ifmtarg{#3}{%
                + \singleContFrac{#1}{#2}%
            }{%
                + \singleContFrac{#1}{#2} \@@contFracStarInline@@#3\@nil%
            }%
        }
\makeatother

\issue{TBA}
\articlehead{TBA}
\titlehead{Adaptive Thiele interpolation}
\authorhead{Oliver Salazar Celis}
\setcounter{page}{1}

\begin{document}

\title{Adaptive Thiele interpolation}

\author{Oliver Salazar Celis \\
ING Belgium \\
Brussels, Belgium 1000 \\
\url{oliver.salazar.celis@ing.com}}

\date{}

\maketitle

\begin{abstract}
The current implementation of Thiele rational interpolation in Maple (the \verb|ThieleInterpolation| routine) breaks down when the points are not well-ordered. In this article, it is shown how this breakdown can be avoided by ordering the interpolation points in an adaptive way.
\end{abstract}

\section{Introduction}
Maple provides  univariate rational interpolation functionality using the \verb|ThieleInterpolation| routine from the \verb|CurveFitting| toolbox. 
Given $n+1$ distinct complex points $x_0,x_1,\ldots,x_n$ together with finite function valuations $f(x_i)=f_i$ ($i=0,\ldots,n$), this routine produces a continued fraction of the form  
\begin{equation} \label{eqn:thiele}
C_n(x) =  \textstyle \contFrac*{  \varphi_0[x_0]  //  x- x_{0} // \varphi_1[x_0,x_1] // x- x_{1} // \varphi_2[x_0,x_1,x_2] // x- x_{2} // \dots // x- x_{n-1}  // \varphi_n[x_0,\ldots,x_n]},
\end{equation}
where the inverse differences $\varphi_i[x_0,\ldots,x_i]$ ($i=0,\ldots,n$) are obtained from the recursion
\begin{equation} \label{eqn:invdif}
\left\{
\begin{aligned}
 \varphi_0[x_k] &= f_k \qquad &k \geq 0 \\
 \varphi_{i+1}[x_0,\ldots,x_i,x_k] &= \frac{x_k-x_i}{\varphi_{i}[x_0,\ldots,x_{i-1},x_k]-\varphi_{i}[x_0,\ldots,x_i]}\qquad &k > i 
\end{aligned}
\right.  .
\end{equation}
It is well-known~\cite{CUYT1988} that the construction of the inverse differences depends on the order in which the points $x_i$ are taken to constuct~\eqref{eqn:thiele}.   
In some cases, one indeed encounters $\varphi_{i}[x_0,\ldots,x_i] = \infty$ due to division by zero in the recursion~\eqref{eqn:invdif}. This does not mean that the problem itself does not have a solution; a simple reordering of the points can typically resolve this.  The documentation of the \verb|ThieleInterpolation| routine warns the user for this situation, but rather than a reordering suggests a perturbation of the points.  
When division by zero is encountered a message as below is shown 
\begin{verbatim}
Error,(in CurveFitting:-ThieleInterpolation) denominator of zero was produced; 
try perturbing the data points
\end{verbatim}
It is not convenient to leave it up to the user to add perturbations or to reorder the points manually by trial and error. In this article we show that an ordering exists that ensures the existence of the interpolating Thiele continued fraction~\eqref{eqn:thiele}, meaning $\varphi_{i}[x_0,\ldots,x_i] \neq \infty$.

\section{Adaptive selection} \label{greedy} 
The documentation of the \verb|ThieleInterpolation| routine already hints to cases to avoid: \emph{For example, a division-by-zero error is produced when two successive points have the same dependent value or when three successive points are collinear}. We can formalize this observation in the following Theorem.

\begin{theorem}\label{thm:exist}
If the distinct points $(x_i)_{0 \leq i \leq n}$ are ordered such that every two consecutive convergents of the continued fraction~\eqref{eqn:thiele} are different, then $\varphi_{i}[x_0,\ldots,x_i]  \neq \infty$.
\end{theorem}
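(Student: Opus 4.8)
The plan is to prove the contrapositive, index by index, and wrap it in an induction on $i$: I will show that whenever the diagonal inverse difference $\varphi_i[x_0,\dots,x_i]$ becomes $\infty$, the two consecutive convergents $C_{i-1}$ and $C_i$ of~\eqref{eqn:thiele} must coincide. Here $C_k$ denotes the $k$-th convergent of~\eqref{eqn:thiele}, which is the rational interpolant of the data at $x_0,\dots,x_k$. The base case $\varphi_0=f_0$ is finite by assumption, and in the inductive step I assume (using the hypothesis together with the already-established cases $j<i$) that $\varphi_0,\dots,\varphi_{i-1}$ are all finite, so that $C_0,\dots,C_{i-1}$ are genuine rational functions.

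First I would settle the bookkeeping on the Riemann sphere. Reading~\eqref{eqn:invdif} as a map of the off-diagonal variable with the diagonal held fixed, each step $t\mapsto (x_k-x_i)/(t-\varphi_i[\dots])$ is a M\"obius map whose determinant $-(x_k-x_i)$ is nonzero, since the points are distinct. Hence no indeterminate form $0/0$ ever arises, every off-diagonal quantity $\varphi_{i-1}[x_0,\dots,x_{i-2},x_i]$ is well defined in the extended plane, and $\varphi_i$ itself is well defined there. Writing $\varphi_i=(x_i-x_{i-1})/\bigl(\varphi_{i-1}[x_0,\dots,x_{i-2},x_i]-\varphi_{i-1}[x_0,\dots,x_{i-1}]\bigr)$ with $x_i\neq x_{i-1}$, the \emph{only} way to obtain $\varphi_i=\infty$ is the vanishing of this denominator, i.e. the off-diagonal value equals the diagonal value $\varphi_{i-1}[x_0,\dots,x_{i-1}]$ (both finite by the inductive hypothesis).

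The core step is then the collapse of the continued fraction. In $C_i$ the innermost term is $(x-x_{i-1})/\varphi_i$, which vanishes identically once $\varphi_i=\infty$, so $C_i\equiv C_{i-1}$. To make this an honest identity of rational functions rather than formal symbol-pushing with $\infty$, I would run the standard convergent recurrence $A_i=\varphi_i A_{i-1}+(x-x_{i-1})A_{i-2}$ and $B_i=\varphi_i B_{i-1}+(x-x_{i-1})B_{i-2}$, and observe that as $\varphi_i\to\infty$ the quotient $C_i=A_i/B_i$ tends to $A_{i-1}/B_{i-1}=C_{i-1}$. A second, more geometric route — the one explaining the "equal successive values / three collinear points" heuristic quoted in the paper — is to establish by downward induction on~\eqref{eqn:invdif} the telescoping identity that capping the fraction at level $i-1$ with the off-diagonal entry $\varphi_{i-1}[x_0,\dots,x_{i-2},x_i]$ reproduces $f_i$ exactly; since capping instead with the diagonal entry $\varphi_{i-1}[x_0,\dots,x_{i-1}]$ yields precisely $C_{i-1}(x_i)$, and since the continued fraction is a nondegenerate (hence injective) M\"obius transformation of its bottom entry, the equality of the two bottom entries is equivalent to $C_{i-1}(x_i)=f_i$, i.e. $C_{i-1}$ already interpolates at $x_i$, which again forces $C_i=C_{i-1}$.

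Taking the contrapositive finishes it: if the ordering makes $C_{i-1}\neq C_i$ for every $i$, then no $\varphi_i$ can equal $\infty$, which is exactly the claim. The step I expect to be the main obstacle is precisely the sphere bookkeeping of the second paragraph together with the rigor of the collapse — making airtight that $\varphi_i=\infty$ (rather than some ambiguous $0/0$) is the sole obstruction, and that $C_i=C_{i-1}$ is a true equality of rational functions. I anticipate the cleanest way to discharge this is the determinant/recurrence computation rather than manipulating $\infty$ directly.
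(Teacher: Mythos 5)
Your proposal is correct, but it is genuinely more self-contained than what the paper does: the paper offers no inline proof of Theorem~\ref{thm:exist} and instead defers to \cite[Theorem 3.2]{salazar2021}, whose mechanism is the residual-ratio identity --- the same one invoked as \eqref{eqn:frac} in the proof of Theorem~\ref{thm:different} --- expressing $\varphi_{i}[x_0,\ldots,x_i]$ as a nonzero multiple of $\bigl(f B_{i-2}-A_{i-2}\bigr)(x_i)\big/\bigl(f B_{i-1}-A_{i-1}\bigr)(x_i)$, so that $\varphi_i=\infty$ occurs exactly when the linearized residual of $C_{i-1}$ vanishes at the new node. Your primary route replaces this citation with a direct contrapositive argument: the Riemann-sphere bookkeeping for \eqref{eqn:invdif} is sound (each step is a M\"obius map of the off-diagonal entry with determinant $-(x_k-x_i)\neq 0$, and the diagonal entries are finite by your induction, so no $0/0$ or $\infty-\infty$ can arise, and $\varphi_i=\infty$ happens iff the off-diagonal entry equals the finite diagonal one), and the projective collapse $(A_i:B_i)=(A_{i-1}:B_{i-1})$ at $\varphi_i=\infty$ in the Wallis recurrence \eqref{eqn:3term} then gives $C_i\equiv C_{i-1}$, contradicting the ordering hypothesis. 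Your secondary ``geometric'' route is, in substance, a re-derivation of the cited result: the telescoping identity (capping the fraction at level $i-1$ with $\varphi_{i-1}[x_0,\ldots,x_{i-2},x_i]$ returns $f_i$) combined with injectivity of the nonsingular M\"obius map of the bottom entry --- the same nonsingular-matrix-product device the paper itself uses in \eqref{eqn:matrixinv} --- yields $\varphi_i=\infty \Leftrightarrow C_{i-1}(x_i)=f_i$, which is precisely the residual interpretation behind \cite[Theorems 3.2 and A.1]{salazar2021}; it also guarantees, via nonsingularity of the product at $x=x_i$, that $A_{i-1}(x_i)$ and $B_{i-1}(x_i)$ cannot vanish simultaneously, so the evaluation $C_{i-1}(x_i)$ is unambiguous on the sphere. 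What each approach buys: yours makes the theorem self-contained and elementary; the paper's citation buys the quantitative identity \eqref{eqn:frac} with its explicit nonzero factor $-(x_{j+1}-x_j)$, which is then reused verbatim in the proof of Theorem~\ref{thm:different} and motivates the greedy selection criterion. One cosmetic caveat: your parenthetical claim that $C_k$ is \emph{the} rational interpolant of the data at $x_0,\ldots,x_k$ should be dropped or weakened to interpolation in the linearized sense, since attainability can fail at degenerate points; nothing in your argument actually uses it.
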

For a proof, we refer to~\cite[Theorem 3.2]{salazar2021} where it is shown that the inverse differences can essentially be interpreted as the ratio of two (linearized) residuals of successive convergents up to some non-zero factor. 

In light of Theorem~\ref{thm:exist}, the division-by-zero error is avoided if the continued fraction~\eqref{eqn:thiele} can be constructed in such a way that every two consecutive convergents are different. 
One way to achieve this, is to choose the next point $x_{i+1}$ in the construction of $C_{i+1}(x)$ with $0<i < n$ such that the current convergent $C_i(x)$ does not interpolate in $x_{i+1}$, i.e.~$C_{i}(x_{i+1}) \neq f_{i+1}$. Hence, given $C_{i}(x)$ and $(x_0,\ldots,x_i)$, we propose to reorder the remaining points $(x_{i+1}, \ldots,x_n)$ and determine $C_{i+1}(x)$ such that 
$|C_i(x_{i+1}) -f(x_{i+1})|$ is maximal. In this way, $C_n(x)$ is ultimate constructed in an adaptive greedy way by choosing in each step the point where for $0<i<n$ the error between $C_{i}(x)$   and $f(x)$ is maximal. The next theorem shows that this strategy indeed succeeds in the avoidance of infinite inverse differences occurring in~\eqref{eqn:thiele}. 

\begin{theorem}\label{thm:different}
If the distinct points $(x_i)_{0 \leq i \leq n}$ are ordered using a strategy where in each step $C_j(x_{j+1}) \neq f(x_{j+1})$ with $0 \leq j < n$ 
then every two consecutive convergents of the continued fraction~\eqref{eqn:thiele} are different and $\varphi_{i}[x_0,\ldots,x_i]  \neq \infty$.
\end{theorem}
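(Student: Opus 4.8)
The plan is to build everything on the defining interpolation property of the convergents of a Thiele continued fraction: the $i$-th convergent $C_i(x)$ reproduces the data at all of $x_0,\ldots,x_i$, that is, $C_i(x_k)=f_k$ for $0\le k\le i$. This is the structural fact I would isolate first, either by appealing to the standard theory in~\cite{CUYT1988} or by a short induction on the recursion~\eqref{eqn:invdif}; it is what connects the abstract convergents to the concrete interpolation conditions that the selection strategy manipulates. With it in hand, the conclusion $\varphi_i[x_0,\ldots,x_i]\neq\infty$ will come for free from Theorem~\ref{thm:exist}, so the real content is to show that consecutive convergents are different.

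Granting the interpolation property, the heart of the argument is a one-line comparison at the freshly added node. Fix $j$ with $0\le j<n$. On the one hand, applying the interpolation property to $C_{j+1}$ gives $C_{j+1}(x_{j+1})=f_{j+1}$. On the other hand, the strategy guarantees $C_j(x_{j+1})\neq f_{j+1}$. Hence $C_j(x_{j+1})\neq C_{j+1}(x_{j+1})$, so $C_j$ and $C_{j+1}$ cannot be the same rational function. Letting $j$ range over $0,\ldots,n-1$ shows that every two consecutive convergents of~\eqref{eqn:thiele} are different, and Theorem~\ref{thm:exist} then yields $\varphi_i[x_0,\ldots,x_i]\neq\infty$ at once. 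If $x_{j+1}$ happens to be a pole of $C_j$, the comparison is simply read in the extended plane, where $C_j(x_{j+1})=\infty\neq f_{j+1}=C_{j+1}(x_{j+1})$, and the conclusion is unchanged.

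The one point that needs care, and which I expect to be the main obstacle, is the apparent circularity between \emph{$C_{j+1}$ is well-defined} and \emph{$\varphi_{j+1}\neq\infty$}: to speak of $C_{j+1}(x_{j+1})$ I already need the $(j+1)$-th inverse difference to be finite, yet finiteness is part of what I am proving. I would break this by running the whole argument as an induction on $j$, carrying the joint hypothesis that $\varphi_0,\ldots,\varphi_j$ are finite, that $C_j$ is a well-defined rational interpolant at $x_0,\ldots,x_j$, and that $C_{j-1}\neq C_j$. In the inductive step the nonzero residual $C_j(x_{j+1})-f_{j+1}$ supplied by the strategy is exactly what keeps the next inverse difference finite, which is the content of the residual interpretation of~\cite[Theorem 3.2]{salazar2021} already invoked for Theorem~\ref{thm:exist}; thus $C_{j+1}$ exists, the comparison above applies, and the induction closes. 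The remaining ingredients, the base case $C_0\equiv f_0$ and the bookkeeping that each convergent interpolates one more node than its predecessor, are routine.
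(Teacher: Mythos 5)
Your scaffolding --- induction on $j$, using the nonzero residual $C_j(x_{j+1})-f(x_{j+1})$ together with the residual interpretation of inverse differences from~\cite{salazar2021} to keep $\varphi_{j+1}$ finite, then comparing the two convergents at the freshly added node --- matches the paper's proof in outline. But there is a genuine gap at the step you declare routine: the claim that $C_i(x_k)=f_k$ for all $0\leq k\leq i$ is \emph{not} standard theory and is false in general. For rational interpolants only the linearized conditions $f(x_k)B_i(x_k)-A_i(x_k)=0$ come for free; the pointwise identity fails at \emph{unattainable} points, where $A_i$ and $B_i$ share the zero $x_k$. Your one-line comparison hinges on $C_{j+1}(x_{j+1})=f_{j+1}$, which therefore requires proving $B_{j+1}(x_{j+1})\neq 0$, and this is exactly where the bulk of the paper's argument lives. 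The paper phrases the comparison in linearized form; it uses the fact (citing~\cite{GM80}) that common factors of $A_j$ and $B_j$ can only involve nodes already used, so that the selection $C_j(x_{j+1})\neq f(x_{j+1})$ really does give $f(x_{j+1})B_j(x_{j+1})-A_j(x_{j+1})\neq 0$; and it rules out $B_{j+1}(x_{j+1})=0$ by inverting the matrix product in~\eqref{eqn:matrix}: since $x_{j+1}\neq x_k$ for $k\leq j$, every factor is nonsingular at $x=x_{j+1}$, so $A_{j+1}(x_{j+1})=B_{j+1}(x_{j+1})=0$ would force the vector with components $\varphi_{j+1}[x_0,\ldots,x_{j+1}]$ and $1$ to vanish, which is absurd (see~\eqref{eqn:matrixinv}). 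Your remark about reading the comparison in the extended plane handles a pole of $C_j$ at $x_{j+1}$, but the actual danger is a common zero of numerator and denominator of $C_{j+1}$ at $x_{j+1}$, which the extended-plane convention does not resolve.

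A second, smaller omission: applying the residual formula of~\cite[Theorem A.1]{salazar2021} to conclude $\varphi_{j+1}[x_0,\ldots,x_{j+1}]\neq\infty$ is itself conditional --- the denominator one level down, $f(x_{j+1})B_{j-1}(x_{j+1})-A_{j-1}(x_{j+1})$, must be nonzero before the formula~\eqref{eqn:frac} may be invoked. The paper earns this by a short contradiction: if $\varphi_{j+1}[x_0,\ldots,x_{j+1}]=\infty$, then the recursion~\eqref{eqn:invdif} forces $\varphi_j[x_0,\ldots,x_{j-1},x_{j+1}]=\varphi_j[x_0,\ldots,x_j]\neq\infty$, and the same residual formula applied at level $j$ then shows $f(x_{j+1})B_{j-1}(x_{j+1})-A_{j-1}(x_{j+1})\neq 0$, after which~\eqref{eqn:frac} is legitimate and its denominator is nonzero by the selection rule. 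Your sketch says the nonzero residual ``is exactly what keeps the next inverse difference finite,'' which is the right intuition, but without these two verifications --- attainability of the fresh node and applicability of the residual formula --- your induction does not close.
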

\begin{proof}
The proof is by induction. Let $x_0$ be chosen with $f(x_i) \neq \infty$ and take $x_1$ such that $f(x_0) \neq f(x_1)$. Then from \eqref{eqn:invdif} we have $\varphi_0[x_0] = f(x_0) \neq \infty$ and  
$\varphi_1[x_0,x_1] = (x_1 - x_0)/ \left( f(x_1) - f(x_0)\right) \neq \infty$. Clearly, also $C_0(x) = \varphi_0[x_0] \not \equiv C_1(x) = \varphi_0[x_0] + (x-x_0)/\varphi_1[x_0,x_1]$. 
Assume that the hypothesis holds up to $0< j< n$. We show further below that this necessarily means that $\varphi_{j+1}[x_0,\ldots,x_{j+1}] \neq \infty$, but momentarily we take it for granted. 
 Then assume by contradiction that for some $0< j< n$, 
$$
C_j(x) = \frac{A_j(x)}{B_j(x)} \, \equiv  \, C_{j+1}(x) = \frac{A_{j+1}(x)}{B_{j+1}(x)}
$$
where for $0 \leq i \leq n$ the numerators $A_i(x)$ and denominators $B_i(x)$ satisfy the recurrence relation~\cite{wallis}
\begin{equation} \label{eqn:3term}
\left(
\begin{aligned}
A_i(x) \\
B_i(x)
\end{aligned}
\right)
=\left(
\begin{aligned}
\varphi_{i}[x_0,\ldots,x_i]  A_{i-1}(x) +(x-x_{i-1})A_{i-2}(x)\\
\varphi_{i}[x_0,\ldots,x_i]  B_{i-1}(x) +(x-x_{i-1})B_{i-2}(x)
\end{aligned}
\right),
\end{equation}
with
$$
\left\{
\begin{aligned}
A_{-2}(x) = 0, & \qquad B_{-2}(x) = 1 \\
A_{-1}(x) = 1, & \qquad B_{-1}(x) = 0 \\
A_{0}(x) = \varphi_{0}[x_0] = f(x_0), &\qquad  B_{0}(x) = 1
\end{aligned}
\right. .
$$
This implies that the polynomial $\left[ A_jB_{j+1} -A_{j+1}B_{j}\right](x) \equiv 0$.  However, by construction
$$
 -\left[ f(x_{j+1})B_j(x_{j+1}) -A_j(x_{j+1}) \right] B_{j+1}(x_{j+1}) + \left[ f(x_{j+1})B_{j+1}(x_{j+1}) -A_{j+1}(x_{j+1}) \right] B_{j}(x_{j+1})  \neq 0
$$
because neither $f(x_{j+1})B_j(x_{j+1}) -A_j(x_{j+1}) \neq 0$ nor $B_{j+1}(x_{j+1}) \neq 0$. 
Of these last two inequalities, the first one follows from the selection of $x_{i+1}$ with $C_j(x_{j+1}) \neq f(x_{j+1})$ and the fact that the only common factors of $A_j(x)$ and $B_j(x)$ can be interpolation points used so far (see for instance~\cite{GM80}). For the second inequality, 
 if it was true that $B_{j+1}(x_{j+1}) = 0$, then necessarily also $A_{j+1}(x_{j+1}) = 0$. 
On the other hand~\eqref{eqn:3term} can be written as 
\begin{equation} \label{eqn:matrix}
\begin{pmatrix}
A_{j+1}(x) \\
B_{j+1}(x)
\end{pmatrix}
= 
\prod_{k=0}^{j} 
\begin{pmatrix}
           \varphi_{k}[x_0,\ldots,x_k] 	& x-x_k \\
	   1    & 0
          \end{pmatrix}
          \begin{pmatrix}
\varphi_{j+1}[x_0,\ldots,x_{j+1}] \\
1
\end{pmatrix}.
\end{equation}
Since $x_{j+1} \neq x_k$ for $k=0,\ldots,j$, none of the matrices in~\eqref{eqn:matrix} are singular when putting $x= x_{j+1}$, hence we can write
\begin{equation}\label{eqn:matrixinv}
         \begin{pmatrix}
\varphi_{j+1}[x_0,\ldots,x_{j+1}] \\
1
\end{pmatrix}
= 
\prod_{k=j}^{0} 
\begin{pmatrix}
           0 	& 1 \\
	   1/(x_{j+1}-x_k)    & - \varphi_{k}[x_0,\ldots,x_k]/(x-x_k)
          \end{pmatrix}
 \begin{pmatrix}
A_{j+1}(x_{j+1}) \\
B_{j+1}(x_{j+1})
\end{pmatrix}.
\end{equation}
But if  $B_{j+1}(x_{j+1}) = A_{j+1}(x_{j+1}) = 0 $, then the right-hand side of~\eqref{eqn:matrixinv} cannot equal the left-hand side. Therefore $B_{j+1}(x_{j+1}) \neq  0$. 

What remains to be shown is that $\varphi_{j+1}[x_0,\ldots,x_{j+1}] =  \infty$ cannot occur. Else from \eqref{eqn:invdif} and the induction hypothesis we have $\varphi_{j}[x_0,\ldots,x_{j-1},x_{j+1}]= \varphi_{j}[x_0,\ldots,x_i]\neq \infty$. From application of~\cite[Theorem A.1]{salazar2021} we can then write
$$
\varphi_{j}[x_0,\ldots,x_{i-1},x_{j+1}] = - (x_{j+1}- x_{j-1}) \frac{f(x_{j+1})B_{j-2}(x_{j+1}) -A_{j-2}(x_{j+1}) }{f(x_{j+1})B_{j-1}(x_{j+1}) -A_{j-1}(x_{j+1}) } \neq \infty.
$$
Hence, necessarily  $f(x_{j+1})B_{j-1}(x_{j+1}) -A_{j-1}(x_{j+1}) \neq 0$ so that application of~\cite[Theorem A.1]{salazar2021} is also allowed for $\varphi_{j+1}[x_0,\ldots,x_{j+1}] $ and we have
\begin{equation} \label{eqn:frac}
\varphi_{j+1}[x_0,\ldots,x_{j+1}] = - (x_{j+1}- x_j) \frac{f(x_{j+1})B_{j-1}(x_{j+1}) -A_{j-1}(x_{j+1}) }{f(x_{j+1})B_{j}(x_{j+1}) -A_{j}(x_{j+1}) }. 
\end{equation}
But, similarly as before, due to the selection $C_j(x_{j+1}) \neq f(x_{j+1})$, the denominator of~\eqref{eqn:frac} does not vanish and therefore also $\varphi_{j+1}[x_0,\ldots,x_{j+1}] \neq \infty$.
\end{proof}

{\bf{Remark 1.}} Theorem~\ref{thm:different} does not require the selection of the interpolation points to be greedy per se. 
Nevertheless, from a numerical point of view it remains important to construct the interpolant in as few steps as possible~\cite{CUYT1988}.
A greedy selection is a heuristic way to such an end. The AAA  approach~\cite{Nakatsukasa2018TheAA} for instance also employs such a strategy; the motivation there is not existence but rather for numerical purposes. The remaining freedom is in the choice of the first point $x_0$. One option is to take a point where $|f(x_0)|$ is minimum. As such, at least one zero of $f(x)$ is exactly represented when present in the data.

{\bf{Remark 2.}} Theorem~\ref{thm:different} ensures that $\varphi_{i}[x_0,\ldots,x_i]  \neq \infty$ for those inverse differences appearing in~\eqref{eqn:thiele}. 
This does not exclude the possibility that $\varphi_{i}[x_0,\ldots,x_i] =0$, nor does it prevent intermediate occurrence of $\varphi_{i+1}[x_0,\ldots,x_i,x_k] = \infty$ for $k>i+1$ in~\eqref{eqn:invdif}.
Such intermediate non-finite values are not necessarily a concern to continue the recursion~\eqref{eqn:invdif} using the IEEE 754 standard.
In fact, they are necessary for the occurrence of $\varphi_{i}[x_0,\ldots,x_i] =0$. Such cases are also of no particular concern unless $i=n$, that is, if it occurs for the last inverse difference in $C_n(x)$. This situation can be avoided by adding a stopping criterion when constructing $C_n(x)$. If the maximum absolute error in the remaining points is below a prescribed tolerance, say \verb|tol=5e-15|, then the recursion is stopped. One way to implement this is 
$$
 \max |C_i(x_{k}) -f(x_{k})| < \text{tol} \times \max_{i<j\leq n}(|f(x_j)|) \qquad  \text{for } i < k \leq n.
$$
Essentially it means that, up to the prescribed tolerance,  the underlying function appears rational and there is no further accuracy  gain to be made by adding more interpolation points.

\section{Numerical example}
An important result in rational approximation theory is due to Newman~\cite{newman64}, 
where it is shown that rational approximations can achieve root-exponential convergence $\mathcal{O}(\exp(-C \sqrt{n})$  (with $C>0$) for $f(x) = |x|$ with $x \in [-1,1]$.
This is much faster than what can be achieved with polynomials which converge at an algebraic rate $\mathcal{O}(n^{-1})$ at best.

The aim of this example is to construct Newman approximations, which are rational interpolants to $f(x) = |x|$ in the $2n+1$ points
\begin{equation} \label{newpoints}
(-1, -\eta,\ldots,-\eta^{n-1}, 0, \eta^{n-1},\ldots, 1), \qquad \text{with } \eta = e^{-1/\sqrt{n}}.
\end{equation}
The below Maple code sets up the interpolation data and calls the \verb|ThieleInterpolation| routine 
\begin{verbnobox}[\small]
>with(ArrayTools); with(CurveFitting); 
>N := 5; 
>xleft := j -> -(exp(-1/sqrt(N)))^(j-1);
>xright:= j -> (exp(-1/sqrt(N)))^(N-j);
>xdata := Concatenate(1, Vector(N, xleft), 0, Vector(N, xright));
>ydata := Vector(2*N+1, j-> abs(xdata[j]));
>ThieleInterpolation(xdata, ydata, x)
\end{verbnobox}
Because the \verb|ThieleInterpolation| routine takes the points~\eqref{newpoints} from left to right, this leads to the division-by-zero error  
This is not surprising because the first points (defined by \verb|xleft|) basically lie on the line $y = -x$ which is already reconstructed by $C_1(x) = -x$ from the first two points.
Hence this implementation will fail for any $n>0$.

\begin{figure}[ht!]
     \centering
     \begin{subfigure}[b]{0.44\textwidth}
         \centering
         \includegraphics[width=\textwidth]{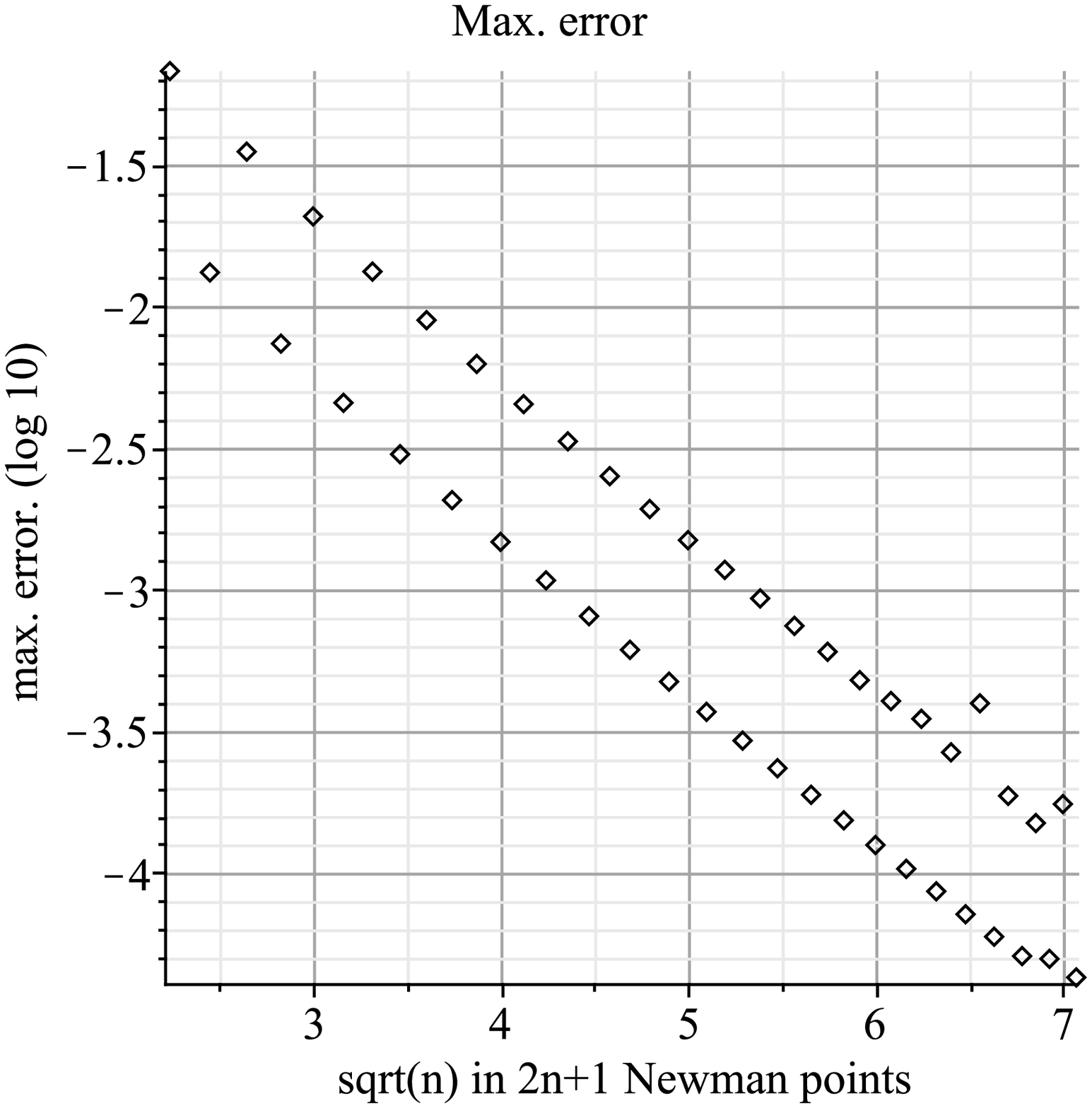}
         \caption{$\|f(x) - C_{2n}(x) \|_\infty$ on $x \in [-1,1] $}
         \label{fig:newmanapprox}
     \end{subfigure}
     \hfill
     \begin{subfigure}[b]{0.44\textwidth}
         \centering
         \includegraphics[width=\textwidth]{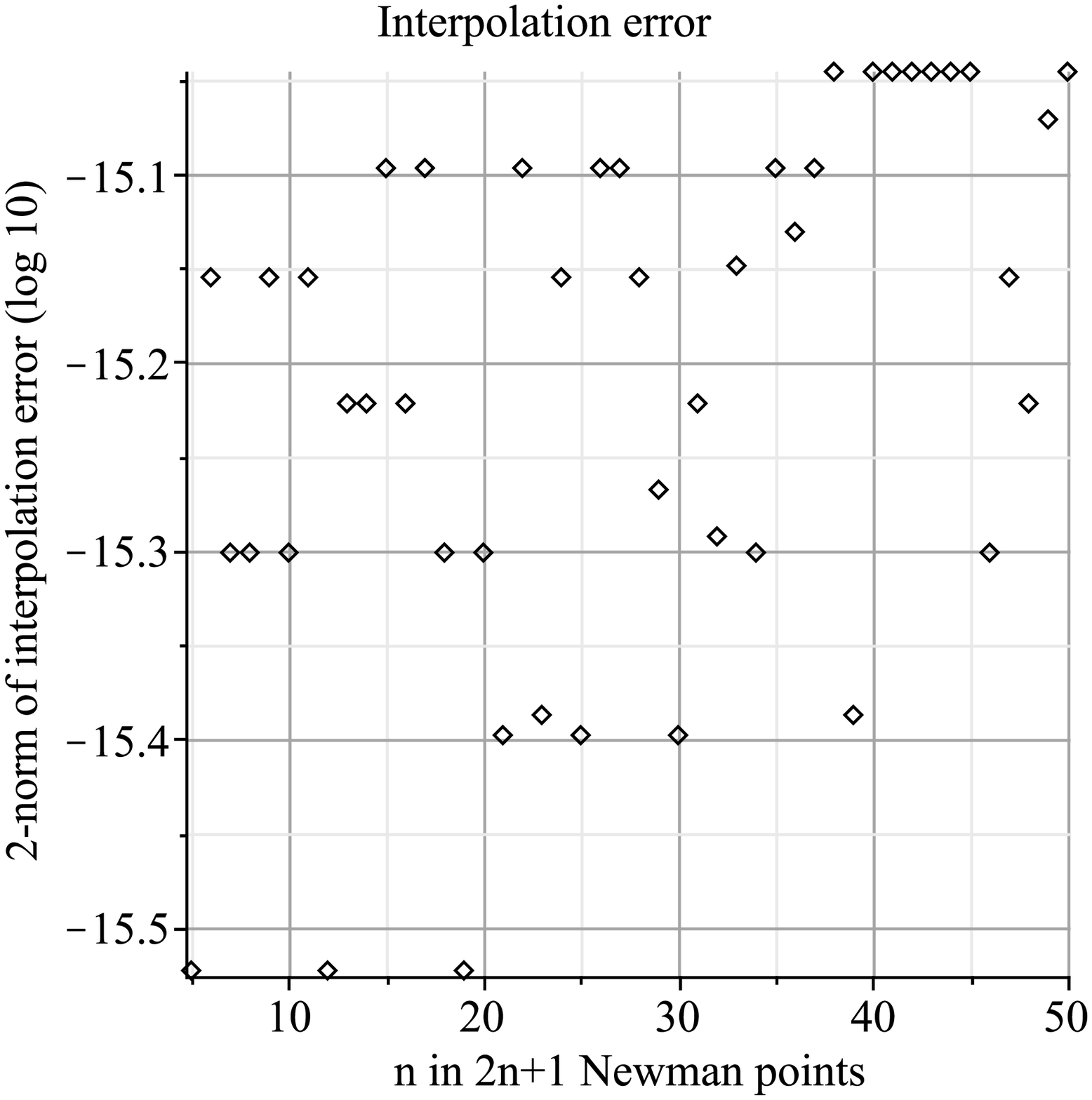}
         \caption{$\| \left( f(x_i)- C_{2n}(x_i) \right)_{0\leq i \leq 2n} \|_2$ }
         \label{fig:forwarderror}
     \end{subfigure}
        \caption{Adaptive Thiele interpolation of $f(x) =|x|$ in Newman points~\eqref{newpoints}  for various $n=5,\ldots,50$.  Left: (discrete) maximum error on $[-1,1]$.
        Right: 2-norm of the interpolation error in the Newman interpolation points~\eqref{newpoints}.}
        \label{fig:absx}
\end{figure}

The situation is completely different when applying the adaptive Thiele interpolation approach. 
A prototype Maple implementation of the greedy strategy is given in Appendix~\ref{sec:maple}. 
Figure~\ref{fig:newmanapprox} shows the maximum error on $[-1,1]$ obtained from Thiele interpolation of $f(x) = |x|$ in Newman points~\eqref{newpoints} for $n=5,\ldots,50$. 
All obtained interpolants use all interpolation points in their construction, meaning that for $n=50$ we have $2n+1 = 101$ interpolation points and we construct $C_{2n}(x) = C_{100}(x)$. 
The root-exponential behavior is clearly visible as a downward sloping straight line when plotting on log10 scale in function of $\sqrt{n}$. 
The maximum error is calculated on a discrete grid of $10000$ points between $0$ and $0.01$ (the maximum error of these interpolants typically occurs near $x=0$ where $f(x) = |x|$ is not differentiable). This discretization gives the impression that convergence is slower for odd $n$ than for even $n$. In fact, odd values of $n$ give approximations with poles in $[-1,1]$, while the even $n$ approximations are pole-free. These observations are in line with those observed in the recent AAA approach~\cite[see Fig.~6.10, p.~1511]{Nakatsukasa2018TheAA}. 

For this run we have put \verb|Digits:=16| to mimic (software) floating-point precision. Recall that by default Maple uses \verb|Digits:=10|. As shown in Figure~\ref{fig:forwarderror} the interpolation error in the Newman points remains highly accurate.

\section{Concluding remarks}
We have shown how the breakdown in the \verb|ThieleInterpolation| routine can be avoided using an adaptive ordering of the interpolation points. 
This actually renders Thiele interpolation into a practical tool for rational interpolation. Particularly when also their poles and zeros can be calculated in a simple fashion as shown in~\cite{salazar2021}.

\appendix
\section{Maple codes} \label{sec:maple}
The ideas above are implemented in Maple with the below prototype code. Mind that the stopping condition is not included here.
\begin{verbnobox}[\small]
restart; with(ArrayTools); with(ListTools); with(LinearAlgebra);

cfrac_eval := proc (aa, zz, xx) 
   description "evaluate continued fraction";
   local N, res, i; 
   N   := Dimension(aa); 
   res := Vector(Dimension(xx), 0); 
   for i from N by -1 to 2 do 
      res := zip(`/`, `~`[`-`](xx, zz[i-1]), `~`[`+`](aa[i], res)) 
   end do; 
   return( `~`[`+`](aa[1], res) )
end proc

cfrac_interpolate := proc (xx, ff) 
   description "adaptive continued fraction interpolation"; 
   local N, rr, k, aa, zz, i, indx_keep, x, f; 
   x := xx; f := ff; N := Dimension(xx); 
   NumericEventHandler(division_by_zero = default); 
   for k to N do 
      if k = 1 then 
         rr := f; i := min[index](abs(rr)); 
         aa := Vector(1, rr[i]); zz := Vector(1, x[i]) 
      else 
         i := max[index](abs(cfrac_eval(aa, zz, x)-f)); 
         rr := zip(`/`, `~`[`-`](x, zz[k-1]), `~`[`-`](rr, aa[k-1])); 
         aa := Append(aa, rr[i]); zz := Append(zz, x[i]) 
       end if; 
       indx_keep := subsop(i = NULL, [seq(1 .. Dimension(x))]); 
       x := x[indx_keep]; 
       f := f[indx_keep]; 
       rr := rr[indx_keep] 
    end do; 
    return Concatenate(2, aa, zz) 
 end proc
\end{verbnobox}

These procedure can then for instance be called on the previous Newman example.
\begin{verbnobox}[\small]
>coefs := cfrac_interpolate(evalf(xdata), evalf(ydata))
>x := evalf(`<,>`(seq((1/1000)*i, i = -1000 .. 1000))); 
>plot(x, cfrac_eval(Column(coefs, 1), Column(coefs, 2), x))
\end{verbnobox}

\bibliographystyle{plain}
\bibliography{cfrac_bib2}
%
%
%

\end{document}